\newtheorem{Theorem}{Theorem}
\newtheorem{Lemma}[Theorem]{Lemma}
\newtheorem{Corollary}[Theorem]{Corollary}
\newtheorem{Proposition}[Theorem]{Proposition}
\newtheorem{Definition}[Theorem]{Definition}
\newtheorem{Remark}[Theorem]{Remark}
\def\Vol{\mbox{Vol}}
\def\Z{{\mathbb Z}}
\def \re{{\mathbb R}}
\def \0{\lambda_{0}}
\def \E{\widetilde{E}}
\def\h{{\rm h}_{\rm top}(g)}
\def\Fs{{\mathcal F}}
\def\Ts{{\mathcal T}}
\def\is{\rm{Iso}}
\def \E{\mathbb{E}}
\begin{document}
\title{Perelman's invariant and collapse via geometric characteristic splittings}

\author[P. Su\'arez-Serrato]{}

\address{Mathematisches Institut Ludwig-Maximilians-Universit\"at M\"unchen, Theresienstrasse 39, M\"unchen 80333 Germany.}
\email{Pablo.Suarez-Serrato@mathematik.uni-muenchen.de}
\keywords{collapse, minimal volume, nonpositive curvature, Yamabe invariant, Perelman's invariant}
\subjclass{53C20, 53C23, 53C44 }

\maketitle
\begin{center}
by  Pablo Su\'arez-Serrato
\\[.5cm]
\today
\end{center}
\begin{abstract}{
 Any closed orientable and smooth non-positively curved manifold $M$ is known to admit a geometric characteristic splitting, analogous to the JSJ decomposition in three dimensions. We show that when this splitting consists of pieces which are Seifert fibered or pieces each of whose fundamental group has non-trivial centre, $M$ collapses with bounded curvature and has zero Perelman invariant.}
\end{abstract}

\maketitle

\section{Introduction}
In this paper we study the relationship between the geometric characteristic splitting of any compact connected non-positively curved manifold $M$ as described by B. Leeb and P. Scott in \cite{LS} and the possibility that $M$ collapses with bounded curvature   in the sense of J. Cheeger and M. Gromov in \cite{CG,Gro, PP}. We show how this information can be used to prove vanishing results about G. Perelman's $\bar{\lambda}$ invariant, which was introduced in \cite{P}.

A smooth manifold $M$ is said to {\it collapse with bounded curvature} if there exists a sequence of metrics $\{ g_{i} \} $ for which the sectional curvature is uniformly bounded, but whose volumes tend to zero as $i$ tends to infinity. Cheeger and Gromov have shown that if a manifold $M$ admits a generalised torus action, technically known as a polarised $\Fs$-structure, then $M$ collapses with bounded curvature \cite{Gro, CG}.

Consider a smooth orientable compact and connected  $n$-manifold $M$ with non-positive curvature and convex boundary. It has been shown by Leeb and Scott that $M$ admits a geometric decomposition analogous to the topological Jaco-Shalen-Johanson \cite{JS, J} torus decomposition in dimension three. Either $M$ has a flat metric, or $M$ can be decomposed along totally geodesic codimension one submanifolds which are flat in the metric induced from $M$. The resulting pieces of this decomposition are either Seifert fibered or codimension-one atoroidal.

One of the aims of this paper is to understand how far the analogy with the JSJ decomposition holds. We will show that some asymptotic invariants, described below, vanish on codimension-one atoroidal pieces whose fundamental groups have non-trivial centre. This allows us to further distinguish between the codimension-one atoroidal pieces.

As pointed out in \cite{LS} it may not be necessary to require that $M$ be non-positively curved for such a decomposition to exist in that E. Rips and Z. Sela have shown in \cite{RS} that an algebraic counterpart holds true, (under some mild but technical hypothesis) any finitely presented group admits a JSJ splitting.


Our main result is

\begin{Theorem}{Let $M$ be closed orientable and smooth non-positively curved $n$-manifold. Assume the pieces of the geometric characteristic splitting of $M$ are Seifert fibered or that the centre of the fundamental group of each piece is non-trivial. Then $M$ admits a polarised $\Fs$-structure.}
\end{Theorem}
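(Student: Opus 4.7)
The plan is to construct a polarised $\Fs$-structure on each piece of the Leeb--Scott geometric decomposition of $M$ and then to splice them across the flat totally geodesic separating hypersurfaces into a single global polarised $\Fs$-structure on $M$. Throughout the construction one may have to pass to finite covers on individual pieces; these can be aligned because each separating hypersurface is finitely covered by a flat torus $T^{n-1}$, and therefore admits simultaneous compatible finite covers from both sides.

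On a Seifert fibered piece, the Seifert fibration itself yields a polarised $\Fs$-structure: near a regular fibre it is a local $T^1$-action, while at a multiple fibre a local $T^1$-action is obtained after a finite cyclic cover. This is the prototypical situation of an $\Fs$-structure in the sense of Cheeger--Gromov, and polarisation is automatic because every orbit is a circle. On a codimension-one atoroidal piece $N$ whose fundamental group has non-trivial centre $Z$ of rank $k \geq 1$, I would invoke the flat torus theorem to produce a $Z$-invariant totally geodesic flat $\R^k \subset \widetilde{N}$, which is preserved by all of $\pi_1(N)$ because $Z$ is central. Splitting results in the style of Lawson--Yau and Gromoll--Wolf for non-positively curved manifolds with convex totally geodesic boundary then provide, after a finite cover, an isometric $T^k$-action on $N$ whose orbits are parallel flat $k$-tori, giving a polarised $\Fs$-structure of rank $k$ on $N$.

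The main obstacle is the final gluing step. The separating hypersurfaces are flat and totally geodesic, and the orbit directions of the $\Fs$-structures on the two adjacent pieces restrict to parallel Killing fields on them. On a collar I would take the abelian Lie algebra spanned by both incoming orbit fields: if they coincide the structures extend directly, and if they differ the combined local torus action has higher rank on the collar but remains polarised. The delicate point is to verify that after simultaneous finite covers of the adjacent pieces matching on the common boundary, this abelian algebra integrates to a smooth local torus action on the collar satisfying the $\Fs$-structure compatibility axioms. The flatness of the separating hypersurface, together with the fact that both orbit directions arise from parallel vector fields in that flat structure, is what makes this gluing feasible; verifying the required local coherence of isotropy groups across the collar is the technical heart of the argument.
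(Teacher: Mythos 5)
Your overall plan---construct polarised $\Fs$-structures piece by piece and glue across the flat separating hypersurfaces---is the same as the paper's, and for the pieces with non-trivial centre you are in essence following the paper's route: it invokes Eberlein's theorem that such a piece $N$ is finitely covered by a product $N^\ast\times T^k$, which is exactly the splitting you would extract (with more work) from Gromoll--Wolf and Lawson--Yau. But there are two genuine problems with the proposal.

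First, the Seifert fibered case. You describe the Seifert fibration as a local $T^1$-action with cyclic covers at multiple fibres. That is the three-dimensional picture. In the Leeb--Scott decomposition a \emph{Seifert fibered} $n$-manifold is a Seifert bundle over a $2$-orbifold with fibre a flat $(n-2)$-manifold, not a circle, so the local torus actions have rank $n-2$ and are obtained after a finite cover on which the flat fibre becomes $T^{n-2}$. The paper gets this directly from the structure of the piece: writing $N\cong H_A/N(A)$ with $H_A\cong Z\times E$, Bieberbach makes $A$ act by translations on the Euclidean factor $E$; projecting $\Gamma$ to $\mathrm{O}(\dim E)$ gives a finite group $G$ whose kernel $\Gamma_0$ yields a finite cover $N_0\cong(\tilde Z/\Gamma_0)\times T^{k}$, from which the polarised $\Fs$-structure on $N$ is read off. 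Your circle-action description simply does not apply to these pieces.

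Second, the gluing is where the paper is concrete and you are vague. Rather than trying to ``integrate the abelian Lie algebra spanned by both incoming orbit fields,'' the paper places an explicit $\Fs$-structure on a collar $N(S)\cong S\times(-\epsilon,\epsilon)$ of each separating flat hypersurface $S$, namely the canonical $\Fs$-structure that any compact flat manifold carries via Bieberbach (described in the paper's $\Fs$-structures-on-flat-manifolds subsection, where $S$ is finitely covered by $T^{n-1}$). The structures coming from the two adjacent pieces restrict on the collar to sub-tori of that full $T^{n-1}$-action, so the compatibility (commuting local lifts) is built in rather than something that must be verified by a delicate integration argument. Without this mediating collar structure, your plan does not explain why the isotropy and covering data of the two sides can be made to match; with it, the matching is by construction.
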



The existence of a polarised $\Fs$-structure on $M$ implies that the minimal volume of $M$ is zero \cite{CG} and therefore also implies that the minimal entropy of $M$ is zero \cite{PP}. Here the minimal entropy is the infimum of $\h$, the topological entropy of the geodesic flow of $(M,g)$, as we run over all unit volume smooth metrics $g$ on $M$.

As $M$ is non-positively curved, if it is not flat then its fundamental group contains a free non-abelian subgroup \cite{Ball} and therefore any smooth metric $g$ on $M$ will have $\h >0$. So the minimal entropy problem cannot be solved for an $M$ as in Theorem 1; the infimum ${\rm h} (M)$ will never be attained by any smooth metric on $M$.

Examples of manifolds for which the minimal entropy problem will be solved are quotients of Euclidean space with a flat metric $g$, which has $\h =0$, or quotients of Hyperbolic space with a constant negative curvature metric. The latter case was shown by G. Besson, G. Courtois and S. Gallot in a series of papers which found deep connections and consequences in the theory, the interested reader is invited to consult \cite{B,BCG}.

The minimal entropy ${\rm h} (M)$ is related to the minimal volume $ {\rm MinVol}(M)$, volume entropy $\lambda(M)$ and simplicial volume $||M||$ of $M$ in the following string of inequalities, noticed by M. Gromov, A. Manning and others \cite{Gro, Man, BCG, Pat}
\[ \frac{n^{n/2}}{n!}||M|| \leq \lambda(M)^{n} \leq  {\rm h} (M)^{n} \leq (n-1)^{n} {\rm MinVol}(M). \]

If $M$ were flat then $M$ would already admit an $\Fs$-structure, since by Bieberbach's theorem $M$ is finitely covered by $T^{n}$. On the other hand, if the sectional curvatures $K_{M}$ of $M$ were negative at every point of $M$, then $||M|| \neq 0$ by a theorem of W.P. Thurston \cite{Gro} which was expanded by H. Inoue and K. Yano \cite{IY}, so $M$ cannot admit $\Fs$-structures. Therefore the interesting scenario is precisely when $K_{M} \leq 0$, $M$ is not flat and does not admit a metric with strictly  negative curvature.

The existence of an $\Fs$-structure on $M$ also implies that the Yamabe invariant (or \emph{sigma constant}) of $M$ is non-negative \cite{LeB, PP}. The Yamabe invariant of $M$ is positive if and only if $M$ admits a metric of positive scalar curvature \cite{Sc}. According to Gromov and B. Lawson any compact smooth manifold which carries a metric of non-positive sectional curvature cannot carry a metric of positive scalar curvature \cite{GL}. Therefore for any compact smooth manifold which admits both a metric of non-positive sectional curvature and an $\Fs$-structure the Yamabe invariant  vanishes. We also obtain information about Perelman's $\bar{\lambda}$ invariant, because in this case they coincide \cite{K2,AIL}.
\begin{Corollary} Assume $M$ is a compact smooth non-positively curved manifold whose geometric characteristic splitting consists of pieces which are Seifert fibered or pieces whose fundamental group has non-trivial centre, then
 \[ \bar{\lambda}(M)=0.\] \end{Corollary}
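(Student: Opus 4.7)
The plan is to assemble the implications already laid out in the introduction, using Theorem 1 as the only nontrivial input. First I would apply Theorem 1 to the manifold $M$: the hypothesis on the geometric characteristic splitting is exactly the one required, so $M$ admits a polarised $\mathcal{F}$-structure.

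Next I would extract the two opposite-sign estimates on the Yamabe invariant. On one hand, by the result of LeBrun and Paternain--Petean quoted in the introduction, the existence of a polarised $\mathcal{F}$-structure forces the Yamabe invariant $\sigma(M)$ to be non-negative. On the other hand, since $M$ admits a metric of non-positive sectional curvature, the theorem of Gromov and Lawson rules out any metric of positive scalar curvature on $M$; by Schoen's characterisation this means $\sigma(M)\le 0$. Combining the two bounds gives $\sigma(M)=0$.

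The last step is to pass from the Yamabe invariant to Perelman's invariant. The results of Kotschick and of Akutagawa--Ishida--LeBrun cited in the introduction state that $\bar{\lambda}(M)$ coincides with $\sigma(M)$ whenever $\sigma(M)\le 0$. Since we have just established $\sigma(M)=0$, this hypothesis is satisfied and we conclude $\bar{\lambda}(M)=\sigma(M)=0$.

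The main obstacle here is not in the Corollary itself but in Theorem 1, whose proof must show that the given splitting hypothesis produces a polarised $\mathcal{F}$-structure on the whole of $M$; once that is granted, the Corollary is essentially an assembly of standard facts, and the only subtlety is invoking the correct regime (non-positive Yamabe invariant) in which $\bar{\lambda}$ and $\sigma$ are known to agree.
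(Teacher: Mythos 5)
Your proposal is correct and matches the paper's own argument: both apply Theorem 1 to produce a polarised $\mathcal{F}$-structure, invoke Paternain--Petean to get $\mathcal{Y}(M)\geq 0$, use Gromov--Lawson (via Schoen's characterisation) to get $\mathcal{Y}(M)\leq 0$, and then pass to $\bar{\lambda}(M)$ via the Kotschick / Akutagawa--Ishida--LeBrun identification in the regime $\mathcal{Y}(M)\leq 0$. You even correctly read ``scalar'' where the paper's Section 2 has a typo saying ``sectional.''
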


Therefore the vanishing of $\bar{\lambda}(M)$ detects a certain lack of hyperbolicity in the sense that neither $M$, or the pieces of its geometric characteristic splitting, may admit a smooth metric of negative sectional curvature.

As a consequence of the description of the characteristic flat submanifold of $M$ in \cite{LS} and Gromov's Cutting Off Theorem \cite{Gro}, we obtain an estimate for the simplicial volume of $M$.

\begin{Proposition} Let $M$ be any closed orientable non-positively curved manifold. Let $N$ denote the complement in $M$ of the  pieces of the geometric characteristic splitting which are Seifert fibered and pieces each of whose fundamental group has non-trivial centre, then
$ ||M|| =  ||N||.$ \end{Proposition}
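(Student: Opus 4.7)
The plan is to invoke Gromov's Cutting Off Theorem to reduce $||M||$ to a sum of relative simplicial volumes of the pieces of the Leeb--Scott decomposition, and then to show that all the pieces that are being discarded contribute zero.

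First I would recall that the geometric characteristic splitting of Leeb and Scott cuts $M$ along totally geodesic codimension-one flat submanifolds. By Bieberbach's theorem the fundamental groups of these separating hypersurfaces are virtually abelian, in particular amenable, which is precisely the hypothesis required by Gromov's Cutting Off Theorem. Applying it yields
\[ ||M|| = \sum_{P} ||P,\partial P||, \]
where $P$ ranges over the pieces of the splitting and $||\cdot,\partial \cdot||$ denotes the relative simplicial volume.

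Next I would verify that each Seifert fibered piece and each codimension-one atoroidal piece with non-trivial centre in its fundamental group carries a polarised $\Fs$-structure. For Seifert pieces the fibration itself supplies such a structure. For the atoroidal pieces with non-trivial centre, the argument mirrors the one used for Theorem 1: a central element in $\pi_1$ of a non-positively curved manifold lifts to a parallel Killing field on the universal cover via the splitting theorem, generating a local circle action that assembles into an $\Fs$-structure. In either case Cheeger and Gromov imply that the piece collapses with bounded curvature and its minimal volume vanishes. Gromov's chain of inequalities, applied in the relative setting,
\[ ||P,\partial P|| \leq (n-1)^n\, {\rm MinVol}(P), \]
then forces $||P,\partial P|| = 0$ for every discarded piece.

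Substituting back, $||M||$ equals the sum of $||P,\partial P||$ over the surviving pieces, namely the codimension-one atoroidal pieces whose $\pi_1$ has trivial centre. These pieces are reassembled, along their shared flat boundary components, precisely to form $N$. A second application of the Cutting Off Theorem, valid because the internal gluing hypersurfaces are still flat and hence amenable, produces
\[ ||N|| = \sum_{\text{surviving } P} ||P,\partial P||. \]
Combining the two identities gives $||M|| = ||N||$.

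The main technical obstacle is the treatment of the codimension-one atoroidal pieces with non-trivial centre: one must produce a polarised $\Fs$-structure on a piece bounded by flat totally geodesic hypersurfaces and verify that the resulting Cheeger--Gromov collapse is controlled up to $\partial P$, so that the vanishing passes to relative simplicial volume. Once this is secured, the rest is a direct consequence of the additivity results in \cite{Gro}.
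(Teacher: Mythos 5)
Your proof follows essentially the same route as the paper's: apply Gromov's Cutting Off Theorem along the flat hypersurfaces (using amenability of their fundamental groups), observe that Seifert pieces and pieces with non-trivial centre carry polarised $\Fs$-structures and hence have vanishing simplicial volume, and then paste the cut locus back to identify the remaining contribution with $||N||$. The only cosmetic difference is that you route the vanishing of the discarded pieces through the relative ${\rm MinVol}$ inequality rather than citing the vanishing directly, and you flag more explicitly than the paper the need to control the collapse near $\partial P$ so that the vanishing passes to the relative simplicial volume.
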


Perhaps the most intriguing of the invariants mentioned above is presently ${\rm h}(M)$, as it is not yet known whether it is homotopy invariant or if it depends on the differentiable structure of $M$. It should be pointed out that the simplicial volume and the volume entropy are homotopy invariant \cite{Bab, Br}, whereas the minimal volume was shown by L. Bessi\`eres to be sensitive to changes in the differentiable structure of $M$ \cite{Bes}. In fact even the vanishing of the minimal volume is not an invariant of topological type, as shown by D. Kotschick \cite{Kot}.


This paper is organised as follows: section 2 contains the relevant definitions and the proofs of Theorem 1 and Proposition 3 are found in the third section. Finally in the fourth section we present a simple example of a manifold whose characteristic geometric splitting has only codimension-one atoroidal pieces and which admits a polarised $\Fs$-structure (in fact a polarised $\Ts$-structure). This example suggests a possible strengthening of the codimension-one atoroidal condition so that we may distinguish between pieces of such type for which the asymptotic invariants mentioned above vanish and those for which they are positive. This point of view also follows the analogy with the JSJ decomposition. Because in dimension 3 the only codimension-one atoroidal pieces are the hyperbolic pieces, which have positive simplicial volume. It also raises a natural question: is the minimal volume of a non-positively curved manifold which has a completely atoroidal piece in its characteristic geometric splitting positive?

{\bf Acknowledgements:} The author wishes to thank Dan Jane for detailed comments and suggestions on a previous version. The author is grateful for comments  Dieter Kotshick made on a previous version and thanks Gabriel Paternain, Bernhard Leeb and G\'erard Besson for interesting conversations. Special thanks also to Hartmut Wei{\ss}    for explaining the relevance of enlargeability and to Michael Brunnbauer for type-setting help. The author is supported by the  Deutsche Forschungsgemeinschaft under the project `Asymptotic Invariants of Manifolds'.

\section{ Definitions}

\subsection{The geometric characteristic splitting}

 Being consistent with \cite{LS}, we recall the following

\begin{Definition}{ A manifold $N$ of dimension $n$ is {\bf Seifert fibered} if $N$ is a Seifert bundle over a 2-dimensional orbifold with fiber a flat $(n-2)$-manifold.}
\end{Definition}

So $N$ is foliated by $(n-2)$-dimensional closed flat manifolds, each leaf $F$ has a foliated neighbourhood $U$ which has a finite cover whose induced foliation is a product $F \times D^2$.

\begin{Definition}{A manifold $N$ of dimension $n$ is {\bf codimension-one atoroidal} if any $\pi_1$-injective map of a $(n-1)$-torus into $M$ is homotopic into the boundary of $N$.}
\end{Definition}

\subsection{ $\Fs$-structures}
An $\Fs$-structure is a generalisation of an $S^1$-action. The existence of an $\Fs$-structure on a manifold implies some of its asymptotic invariants vanish \cite{Gro, CG, PP}.

\begin{Definition}{ An $\Fs$-structure on a closed manifold $M$ is given by,
\begin{enumerate}
\item{ A finite open cover $\{ U_1, ..., U_{N} \} $;}
\item{ $\pi_{i}:\widetilde{U}_{i}\rightarrow U_{i}$ a finite Galois covering with group of deck transformations $\Gamma_{i}$, $1\leq i \leq N$;}

\item{ A smooth torus action with finite kernel of the $k_{i}$-dimensional torus, \\ $\phi_{i}:T^{k_{i}}\rightarrow {\rm{Diff}}(\widetilde{U}_{i})$, $1\leq i \leq N$;}

\item{ A homomorphism $\Psi_{i}:\Gamma_{i}\rightarrow {\rm{Aut}}(T^{k_{i}})$ such that
\[ \gamma(\phi_{i}(t)(x))=\phi_{i}(\Psi_{i}(\gamma)(t))(\gamma x) \]
for all $\gamma \in \Gamma_{i}$, $t \in T^{k_{i}}$ and $x \in \widetilde{U}_{i}$; }

\item{ For any finite sub-collection $\{ U_{i_{1}}, ..., U_{i_{l}} \} $ such that  $U_{i_{1}\ldots i_{l}}:=U_{i_{1}}\cap \ldots \cap U_{i_{l}}\neq\emptyset$ the following compatibility condition holds: let $\widetilde{U}_{i_{1}\ldots i_{l}}$ be the set of points $(x_{i_{1}}, \ldots , x_{i_{l}})\in \widetilde{U}_{i_{1}}\times \ldots \times \widetilde{U}_{i_{l}}$ such that $\pi_{i_{1}}(x_{i_{1}})=\ldots = \pi_{i_{l}}(x_{i_{l}})$. The set $\widetilde{U}_{i_{1}\ldots i_{l}}$ covers $\pi_{i_{j}}^{-1}(U_{i_{1}\ldots i_{l}}) \subset \widetilde{U}_{i_{j}}$ for all $1\leq j \leq l$, then we require that $\phi_{i_{j}}$ leaves $\pi_{i_{j}}^{-1}(U_{i_{1}\ldots i_{l}})$ invariant and it lifts to an action on $\widetilde{U}_{i_{1}\ldots i_{l}}$ such that all lifted actions commute. }
\end{enumerate}}\end{Definition}

An $\Fs$-structure is said to be {\em pure} if all the orbits of all actions at a point, for every point have the same dimension.

We will say an $\Fs$-structure is {\em polarised} if the smooth torus action $\phi_{i}$ above are fixed point free for every $U_{i}$.

\subsection{$\Fs$-structures on flat manifolds}\label{flat}

The isometry group of $\E^n$ is the semidirect product of $\re^n$ and $ {\rm O}(n)$.
Let $\rho:  {\rm O}(n)\to \mbox{\rm Aut}(\re^n)$ be the map $\rho(B)(x)=Bx$.
Let $\Gamma\subset \is(\E^n)$ be a cocompact lattice and $M:=\E^n/\Gamma$ a compact flat manifold.
Let $p:\Gamma\to  {\rm O}(n)$ be the homomorphism $p(t,\alpha)=\alpha$, where $(t,\alpha)\in \re^n\times  {\rm O}(n)$.
The Bieberbach theorem ensures that $\Gamma$ meets the translations in a lattice
( the kernel of $p$ is isomorphic to $\Z^n$) and $p(\Gamma)$ is a finite group $G$.
Then $M$ is finitely covered by the torus $\re^n/\mbox{\rm ker}(p)$ and the deck transformation group of this finite
cover is $G$. 

Notice that for any $\alpha\in G$, $\rho(\alpha)$ maps $\mbox{\rm ker}(p)$ to itself
because
\[(u,\alpha)\circ (s,I)\circ (u,\alpha)^{-1}=(\rho(\alpha)s,I)\]
and thus if $(s,I)\in\Gamma$, then $(\rho(\alpha)s,I)\in \Gamma$.

It follows that the  map $\rho:  {\rm O}(n)\to \mbox{\rm Aut}(\re^n)$ induces a map
$$\psi:G\to \mbox{\rm Aut}( T^n=\re^n/\mbox{\rm ker}(p)).$$

As an action $\phi$ of $T^n$ on
$\re^n/\mbox{\rm ker}(p)$ we take $x\mapsto x+t$.
To see that this defines an $\Fs$-structure we check
the condition $\alpha(\phi(t)(x))=\phi(\psi(\alpha)(t))(\alpha (x))$ for $\alpha\in G$ which just says
$\alpha(x+t)=\alpha(x)+\alpha(t)$.

This $\Fs$-structure on $M$ extends to an $\Fs$-structure on the product $M\times I$, where $I$ denotes any interval.

\subsection{Asymptotic Invariants}

The simplicial volume $||M||$ of a closed orientable manifold $M$ is defined as the infimum of $\Sigma_{i}|r_{i}|$ where $r_{i}$ are the coefficients of any \emph{real} cycle representing the fundamental class of $M$. For the definition and relevant properties in case $M$ has boundary, the reader is invited to consult \cite{Gro}.

For a closed connected smooth Riemannian manifold $(M,g)$, let $\rm{Vol}(M,g)$ denote its volume and let $K_{g}$ its sectional curvature. We define the following minimal volumes \cite{Gro}:

$$\rm{MinVol}(M):=\inf\limits_{g} \{ \rm{Vol}(M,g)\quad : \quad |K_{g}| \leq 1 \}$$
and
$$\rm{Vol_{K}}(M):=\inf\limits_{g} \{ \rm{Vol}(M,g)\quad : \quad K_{g} \geq -1 \}.$$ 
\medskip

The vanishing of  $\rm{Vol_{K}}(M)$ implies that the simplicial volume of $M$ is also zero, using Bishop's comparison theorem. If $M$ admits an $\Fs$-structure then  $\rm{Vol_{K}}(M)=0$ \cite{PP}.

The minimal entropy ${\rm h}(M)$ of a closed smooth manifold $M$ is defined as the infimum of the topological entropy $\h$ of the geodesic flow of $g$ over the family of $C^{\infty}$ Riemannian metrics $g$ on $M$ with unit volume.

The geometric meaning of $\h$ is best expressed by Ma\~n\'e's formula \cite{Man, Pat}
$$\h= \lim_{T\rightarrow
\infty}\frac{1}{T}\log \int_{M\times M}n_{T}(p,q)\;dp\,dq. $$
Here $n_{T}(p,q)$ is defined to be the number of
geodesic arcs of length $\leq T$ joining the points $p$ and $q$ of $M$. So that if $\h >0$ then there are on average exponentially many geodesics between any two points of $M$.

\subsection{ Perelman and Yamabe invariant(s)}

Assume $(M^{n}, g)$ is a smooth compact manifold of dimension $n \geq 3$. G. Perelman considered in \cite[pg. 7]{P} the smallest eigenvalue $\lambda_{g}$ of the elliptic operator $4 \Delta_{g} + s_{g}$, here $\Delta = d^{\ast}d= -\nabla \cdot \nabla$ is the positive-spectrum Laplace-Beltrami operator associated with $g$ and $s_{g}$ is the scalar curvature of $g$. Perelman defined
$$ \bar{\lambda}(M):= \sup\limits_{g} \lambda_{g} \Vol(M,g)^{2/n}.$$
Where the supremum is taken over all smooth metrics $g$ on $M$.

Amongst his various remarkable contribuitions to the understanding of the Ricci flow, Perelman observed that $\bar{\lambda}$ is non-decreasing along the Ricci flow whenever it is non-positive. A detailed proof can be found in \cite[8.I.2.3, p. 22]{KL1}.

Now we review the definition of the Yamabe invariant. Consider a fixed conformal class of metrics $\gamma $ on the smooth closed manifold $M$, and let the Yamabe constant of $(M, \gamma )$ be
$$ \mathcal{Y}(M, \gamma )= \inf\limits_{g \in \gamma} \frac{\int\limits_{M} s_{g} dvol_{g}}{(\Vol(M,g))^{2 / n }}.$$

The {\emph{Yamabe invariant}} is then defined to be
$$ \mathcal{Y}(M) = \sup\limits_{\gamma} \mathcal{Y}(M, \gamma ),$$
where the supremum is taken over all conformal classes of metrics on $M$.

A result of G. Paternain and J. Petean \cite[Theorem 7.2]{PP} states that if a smooth compact manifold $M$ admits an $\Fs$-structure then $ \mathcal{Y}(M) \geq 0$.

It was noted by D. Kotschick in \cite{K2} that the Perelman and Yamabe invariants essentially coincide. The precise relationship between them was shown by A. Akutagawa, M. Ishida and C. LeBrun in \cite{AIL} to be
$$\bar{\lambda}(M) = \begin{cases}
\mathcal{Y}(M) & \text{if } \quad \mathcal{Y}(M) \leq 0 \\
+\infty & \text{if } \quad \mathcal{Y}(M) > 0.
\end{cases}$$

On the other hand, a well known fact about the Yamabe invariant is that $\mathcal{Y}(M) >0 $ if and only if $M$ admits a smooth metric of positive sectional curvature, see for example \cite{Sc}. Therefore when $M$ admits an $\Fs$-structure and cannot admit a metric of positive scalar curvature, we get $\mathcal{Y}(M) = \bar{\lambda}(M) =0$.

This is precisely the case for non-positively curved manifolds, as mentioned in the introduction, since Gromov and Lawson have shown in \cite{GL} that non-positively curved manifolds can not carry a metric of positive scalar curvature. So when a non-positively curved manifold admits an $\Fs$-structure, we have $\mathcal{Y}(M) = \bar{\lambda}(M) =0$.

In particular, applied to the construction in Theorem 1, the above discussion provides a proof of Corollary 2.

\begin{Remark} The same argument shows $\bar{\lambda}(M) =0$ when $M$ admits an $\Fs$-strucure, $M$ is enlargeable in the sense of \cite{GL} and its universal covering space is spin becuase under these conditions $M$ cannot admit a metric of positive scalar curvature.
\end{Remark}

\section{Proofs}

We begin by recalling some aspects of the description of the geometric characteristic splitting which we will use later, for consistency we will keep the notation in \cite{LS} throughout. The reader may consult \cite{Ball} for facts about non-positively curved manifolds and \cite{LS} for further details about the characteristic geometric splitting.

Let $X$ be a simply connected non-positively curved manifold. For every isometry $\phi$ of $X$ denote by ${\rm MIN}(\phi)$ the  set where $d_{\phi}: x\to d(x, \phi x)$ assumes its infimum. If $A$ is an abelian subgroup of the isometry group of $X$, define
$${\rm MIN}(A):= \bigcap\limits_{\phi\in A} {\rm MIN}(\phi)\cong E \times Y.$$
Here $E$ is a Euclidean space and $Y$ is a simply connected manifold of non-positive curvature with convex boundary.

Let $C(A)$ be the centraliser of $A$ and $N(A)$ its normaliser. That $N(A)$ acts on $X$ preserving the metric splitting  ${\rm MIN}(A)\cong E \times Y$ is shown in \cite{LS}. A flat is called $A$-invariant if the action of any element of $A$ fixes it as a set. We will call a flat submanifold $F$ of $X$ a $\Gamma$-flat if the action of $\Gamma$ on $F$ has compact quotient.

We will first treat the case when the geometric characteristic splitting of $M$ consists of Seifert fibered pieces only.

\begin{Theorem}{Let $M$ be a closed non-positively curved smooth $n$-manifold. Assume $M$ only has Seifert fibered pieces in its geometric characteristic splitting. Then $M$ admits a pure polarised $\Fs$-structure.}
\end{Theorem}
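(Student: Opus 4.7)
The plan is to cover $M$ by open neighbourhoods adapted to the pieces of the geometric characteristic splitting, equip each with a local torus action of orbit dimension $n-2$ coming from the flat Seifert fibres, and verify that the local data fit together across the flat splitting hypersurfaces to form an $\Fs$-structure in the sense of Definition 6. Purity will follow because every local action will have orbit dimension $n-2$; polarisation because the $(n-2)$-dimensional fibres of the Seifert fibration are closed manifolds, so no local torus action has a fixed point.

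First I would work inside a single Seifert fibered piece $N_i$. By Definition 4, each fibre $F$ of $N_i$ is a closed flat $(n-2)$-manifold, and it has a foliated neighbourhood $U\subset N_i$ whose finite Galois cover has induced foliation the product $F\times D^2$. Applying Bieberbach's theorem to $F$ exactly as in section 2.3 then lets me pass to a further finite cover $\widetilde{U}$ in which $F$ is replaced by the torus $T^{n-2}=\re^{n-2}/\mathrm{ker}(p)$. Translation by $T^{n-2}$ on the first factor gives a free action $\phi_i$, and the induced homomorphism $\psi_i\colon\Gamma_i\to\mathrm{Aut}(T^{n-2})$ fulfils the equivariance condition (4) of Definition 6 as in section 2.3. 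Covering the interior of $N_i$ by such neighbourhoods produces a polarised local $\Fs$-structure of pure orbit dimension $n-2$ on each piece.

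Next I would extend these charts slightly across the splitting hypersurfaces. By Leeb--Scott, each component $S$ of the characteristic splitting is a totally geodesic flat $(n-1)$-submanifold with a tubular neighbourhood of the form $S\times(-\varepsilon,\varepsilon)$. Because the boundary of a Seifert fibered piece is the preimage of the orbifold boundary of its base, the fibres of $N_i$ touching $S$ lie entirely inside $S$, and the same holds for the adjacent piece $N_j$. This produces two parallel $(n-2)$-dimensional flat subfoliations of $S$, generically distinct. I extend each piece's torus action into the collar $S\times(-\varepsilon,\varepsilon)$ by the formula $x\mapsto x+t$ in the flat factor, so on the fibre-product cover of an overlap $U_{ij}$ I have two translation actions of tori acting on $\re^{n-1}\times(-\varepsilon,\varepsilon)$. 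Commutativity of translations on this abelian group then yields condition (5) of Definition 6, and the local structures glue into a global $\Fs$-structure on $M$.

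The main obstacle is the bookkeeping for the finite covers along the splitting hypersurfaces: each side contributes its own Bieberbach deck group $\Gamma_i$ or $\Gamma_j$ and its own torus cover, and I must produce a common refinement on a collar of $S$ simultaneously compatible with both equivariance homomorphisms $\psi_i$ and $\psi_j$, displaying the two $T^{n-2}$-actions as commuting lifts on the same finite cover. What makes this feasible is that everything reduces to translations on a flat Euclidean factor, where all actions automatically commute; the careful part is verifying property (5) of Definition 6 and ensuring that, at every point, all local orbits have the same dimension $n-2$, so that the resulting structure is genuinely pure and not merely polarised.
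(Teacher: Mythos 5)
Your argument is correct in outline but takes a genuinely more local route than the paper. The paper's proof works one piece at a time but globally within each piece: it uses the Leeb--Scott description $N \cong S_A = H_A/N(A)$ with $H_A = Z\times E$, projects $\Gamma$ to $\mathrm{O}(k)$ via $\mathrm{Iso}(\widetilde Z)\times\mathrm{Iso}(E)\to\mathrm{Iso}(E)\to\mathrm{O}(k)$, and obtains a \emph{single} finite cover $N_0 \cong (\widetilde Z/\Gamma_0)\times T^{n-2}$ of the whole piece, with deck group $G=p(\Gamma)$; the $\Fs$-chart on the entire piece is this one product cover, with $\Psi: G\to\mathrm{Aut}(T^{n-2})$ read off from section 2.3. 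You instead cover the interior of each piece by many small leaf neighbourhoods $U$, each with its own finite fibre-product cover where the flat fibre is replaced by $T^{n-2}$, and then you must also verify condition (5) of Definition 6 for overlaps of two such neighbourhoods \emph{within} the same piece — a step the paper's ``one chart per piece'' strategy avoids entirely. Your approach is workable (the lifted actions are all translations along the Seifert fibre direction, so commutativity holds), but it shifts the load onto exactly the bookkeeping you flag as the main obstacle and leave unresolved, both inside a piece and along the splitting hypersurfaces. The paper's global construction buys you that compatibility nearly for free within a piece and reduces the gluing issue to the flat collar neighbourhoods $S\times(-\varepsilon,\varepsilon)$ of the characteristic hypersurfaces, where the construction of section 2.3 applies verbatim. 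Neither argument writes out condition (5) across hypersurfaces in full, but the paper's reduction makes that omission a smaller one; if you keep the local strategy you should at least spell out why the fibre-product covers of two overlapping leaf neighbourhoods admit a common refinement carrying both lifted $T^{n-2}$-translations.
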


\begin{proof} Let $N$ be one of the Seifert fibered pieces of the geometric characteristic splitting of $M$. Then $N$ is diffeomorphic to the Seifert fibered manifold $S_{A}$, here $A$ is the abelian subgroup of $\Gamma =\pi_1(N)$ which defines the fibration as in \cite{LS}. Recall $S_{A}= H_{A} / N(A)$, where $H_{A}$ is the closed convex hull of all $A$-invariant $\Gamma$-flats and $N(A)$ is the normaliser of $A$.

For a closed convex subset $Z$ of $Y$ we have $H_{A}=Z \times E \subset Y\times E = {\rm MIN}(A)$. Bieberbach's theorem implies that on $E \times Y$ the abelian group $A$ acts by translations on the Euclidean factor $E$.

The group $\Gamma $ acts by deck transformations on the universal covering $\tilde{H}_{A}=\tilde{Z}\times E$ of $H_{A}$. Therefore ${\is}(\tilde{H}_{A}) = {\is}(\tilde{Z}) \times {\is}( E ) $. So (by a slight abuse of notation) we can view $\Gamma $ as a subgroup of ${\is}(\tilde{H}_{A})$, the isometry group of $\tilde{H}_{A}$. Say ${\rm dim} (E)=k$ and consider the projection homomorphism
$${\is}(\tilde{H}_{A}) = {\is}(\tilde{Z}) \times {\is}( E ) \rightarrow {\is}(E)\to {\rm O}(k).$$
Then we have a homomorphism $\Gamma \to {\rm O}(k)$ with image a finite group $G$. Its kernel is a finite index subgroup $\Gamma_{0} \subset {\is}(\tilde{Z}) \times \re^{k}$.
It follows that the manifold
$$N_{0}:=\tilde{H}_{A} / \Gamma_{0}\cong (\tilde{Z}\times E)/\Gamma_{0}\cong (\tilde{Z} / \Gamma_0)\times T^{k}$$ is a finite cover of $N$ with $G$ as a deck transformation group.
In this way we obtain an $\Fs$-structure on $N$, with $\Psi:G\to {\rm Aut}(T^{k})$ as in \ref{flat}, to comply with all the requirements in the $\Fs$-structure definition.
As we are assuming every piece of the geometric characteristic splitting of $M$ is Seifert fibered, this construction furnishes $M$ with an $\Fs$-structure.

 This $\Fs$-structure is pure because the dimension of the tori which define the local actions is $n-2$ over each Seifert piece and can be taken to be $n-2$ over neighbourhoods of the flat hypersurfaces. It is also polarised; $T^{n-2}$ acts freely on itself.\end{proof}

We now consider the case when the pieces of the geometric characteristic splitting have fundamental groups with non-trivial centre, this situation is slightly more general than the Seifert fibered case. However, the strategy is the same; each piece will be finitely covered by a smooth manifold which splits off a torus. The resulting $\Fs$-structure, although polarised, will not always be pure because the dimension of these tori may vary from piece to piece.

\begin{Theorem}
Let $M$ be a closed smooth orientable manifold of non-positive sectional curvature. Assume that the fundamental group of every piece of its geometric characteristic splitting has non-trivial centre. Then $M$ admits a polarised $\Fs$-structure.
\end{Theorem}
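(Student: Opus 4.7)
The plan is to adapt the construction from the Seifert fibered case just above, using a maximal free abelian subgroup of the centre of $\pi_1$ of each piece in place of the Seifert abelian subgroup. Fix a piece $N$ of the geometric characteristic splitting of $M$ and set $\Gamma := \pi_1(N)$. By hypothesis $Z(\Gamma)$ is nontrivial; choose $A \cong \Z^k$, a maximal free abelian subgroup of $Z(\Gamma)$, with $k \ge 1$. Since $A$ lies in the centre, $C(A) = N(A) = \Gamma$, and for each $\phi \in A$ the displacement function $d_\phi$ on $\tilde N$ satisfies $d_\phi(\gamma x) = d(x, \gamma^{-1}\phi \gamma x) = d_\phi(x)$ for every $\gamma \in \Gamma$. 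Hence $d_\phi$ descends to a bounded function on the compact quotient $N$. In non-positive curvature $d_\phi$ is convex along geodesics, and a convex function on $\re$ that is bounded above is constant, so $d_\phi$ is globally constant and ${\rm MIN}(\phi) = \tilde N$. Intersecting over $A$ gives ${\rm MIN}(A) = \tilde N$, and the splitting result recalled from \cite{LS} then provides an isometric decomposition $\tilde N \cong Y \times E$ with $E \cong \re^k$ on which $A$ acts cocompactly by translations.

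Now the argument of the preceding Seifert case carries over word for word. The projection
\[ {\is}(\tilde N) = {\is}(Y) \times {\is}(E) \;\to\; {\is}(E) \;\to\; {\rm O}(k) \]
sends $\Gamma$ into the finite point group of the lattice $A \subset E$, since $A$ is normal in $\Gamma$ and is therefore preserved under conjugation. This yields a finite image $G$ and a finite-index kernel $\Gamma_0 \subset {\is}(Y) \times \re^k$. The corresponding finite cover $N_0 = \tilde N / \Gamma_0 \cong (Y / \Gamma_0') \times T^k$ splits off a $T^k$ factor, and the standard $T^k$-action on this factor together with the induced homomorphism $G \to {\rm Aut}(T^k)$ defines a polarised $\Fs$-structure on $N$ exactly as in section \ref{flat}.

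To assemble these piece-wise structures into a global polarised $\Fs$-structure on $M$ I would extend across the flat codimension-one hypersurfaces. Each such separating hypersurface $F$ is a closed flat $(n-1)$-manifold and by section \ref{flat} carries a natural $T^{n-1}$-based $\Fs$-structure that extends to a collar $F \times I$. After passing to a common refinement of the finite Galois covers defining the local charts, the piece-wise tori $T^{k_i}$ on the two sides of $F$ embed as commuting subtori of the collar's $T^{n-1}$, yielding the compatibility condition for an $\Fs$-structure. Each torus acts freely on itself, so the global $\Fs$-structure is polarised; it is in general not pure because $k$ may vary from piece to piece. The main obstacle is precisely this matching across the flat hypersurfaces: unlike the Seifert case, where all local tori had dimension $n-2$ and overlaps were automatic, here one must fit tori of possibly different dimensions as commuting subtori of $T^{n-1}$ on each collar, which requires care with the finite covers but follows from the Bieberbach structure of the flat separating hypersurface.
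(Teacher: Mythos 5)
Your proof takes a genuinely different route from the paper's. The paper simply invokes Eberlein's theorem from \cite{Eb2}: if $Z(\pi_1(N)) \neq 1$ then some finite cover $N^0$ of $N$ is diffeomorphic to $N^{\ast}\times T^{k}$, and then imports the $\Fs$-structure construction wholesale from Theorem 8. You instead attempt to re-derive this splitting from scratch via the Gromoll--Wolf convexity mechanism (central $\phi$ has $\Gamma$-invariant, hence bounded, displacement $d_\phi$; convexity forces $d_\phi$ constant; hence $\phi$ is a Clifford translation and a Euclidean factor splits off) together with the Bieberbach projection from the Seifert case. This is a legitimate self-contained strategy and makes explicit the geometry that Eberlein's theorem packages.

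However, there is a gap at the crucial step ``$d_\phi$ bounded and convex, therefore constant, therefore $\mathrm{MIN}(\phi)=\tilde N$.'' That conclusion uses the fact that a convex function bounded above on all of $\re$ is constant, which requires the geodesics of $\tilde N$ to be parametrised by all of $\re$. The pieces $N$ of the geometric characteristic splitting are compact manifolds \emph{with convex boundary}, so $\tilde N$ is a convex subset of the Hadamard manifold $\tilde M$ and is not geodesically complete: geodesics of $\tilde N$ may terminate on $\partial\tilde N$. A convex function bounded on a finite geodesic segment need not be constant there. One cannot simply pass to complete geodesics of $\tilde M$, since $d_\phi$ is $\pi_1(N)$-invariant but not $\pi_1(M)$-invariant and hence is only known to be bounded on $\tilde N$, not on $\tilde M$. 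This is precisely the subtlety that Eberlein's result is tailored to absorb, which is why the paper cites it rather than re-proving it. To close your argument you would need the version of the Lawson--Yau/Gromoll--Wolf splitting adapted to compact manifolds with convex boundary (which is essentially what \cite{Eb1,Eb2} supply), or an independent argument that $\mathrm{MIN}(A)$ meets $\tilde N$ cocompactly and decomposes as claimed.

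On the assembly step: both you and the paper treat the matching across the flat separating hypersurfaces somewhat briskly. You are at least explicit that this is ``the main obstacle'' and that it ``requires care,'' whereas the paper simply asserts the local structures commute on overlaps; neither gives full detail, so this is comparable rather than a defect unique to your write-up.
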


\begin{proof} Let $N$ denote a piece of the geometric characteristic splitting. By the main result of P. Eberlein in \cite{Eb2} if the centre of $\pi_1(N)$ is non-trivial, then there exists a finite covering $N^0$ of $N$ such that $N^0$ is diffeomorphic to $N^{\ast} \times T^{k}$. Therefore $N^0 \rightarrow N$ induces a polarised $\Fs$-structure on $N$ as in the previous Theorem. This is true for every piece of the geometric characteristic splitting, by hypothesis.

Let $S$ denote a flat hypersurface of $M$ which is a component of the geometric characteristic flat manifold $V$ found in the geometric splitting. Choose a small enough $\epsilon$-neighbourhood $N(S)$ of $S$ in $M$, so that $N(S)$ is diffeomorphic to $S \times (-\epsilon , \epsilon ) $ and is disjoint from other such flat characteristic hypersurface components. Then $N(S)$ admits an $\Fs$-structure induced from the flat $\Fs$-structure on $S$ which was described in \ref{flat}.

All the above $\Fs$-structures commute on overlaps and are polarised, therefore they endow $M$ with a polarised $\Fs$-structure.
\end{proof}

Theorem 1 now follows from Theorems 8 and 9.

 Next we see that the simplicial volume does not detect the pieces of the geometric characteristic splitting which are Seifert fibered or have fundamental groups with non-trivial centre.

\subsection{Proof of Proposition 3}
\begin{proof}
Let $V$ denote the characteristic flat submanifold of $M$ which determines its geometric splitting. Recall that the fundamental group of each flat component of $V$ injects into $\pi_1(M)$. Also note that fundamental groups of flat manifolds are amenable, so that we can now cut $V$ off from $M$, as in \cite{Gro}, and its simplicial volume will remain unaffected. Let $N_{i}$ denote the components of $M - V$ and $N$ denote the complement in $M$ of the Seifert pieces and of the pieces each of whose fundamental group has non-trivial centre. Then the above discussion implies that
$$ ||M|| = ||M - V || = \sum_{i}||N_{i}||.$$

If the piece $N_{i}$ is Seifert fibered or its fundamental group has non-trivial centre, $N_{i}$ admits an $\Fs$-strucutre and therefore $||N_{i}||=0$. So that by Gromov's Cutting Off Theorem (in reverse, pasting $V$ back to $N$) we have $ ||M|| =  ||N||$.
\end{proof}

\subsection{Complete atoroidality }

Consider a hyperbolic 3-manifold $M$ of finite volume and with geodesic boundary $\partial M$, such that $\partial M$ has only one component and this is a 2-torus. Endow $N=M\times S^{1}$ with the product of the hyperbolic metric on $M$ and the standard metric on $S^1$, so that $N$ is a non-positively curved manifold. Denote by $DN$ the double of $N$, two copies of $N$ glued along $\partial N = T^3$ by the identity; $DN$ is by construction non-positively curved.

The unique flat 3-manifold which can be embedded in $DN$ is $\partial N = T^3$. So the characteristic geometric splitting (by construction) of $DN$ is precisely the two $N$ pieces separated by $\partial N = T^3$. Notice that the two copies of $N$ constitute the pieces of the characteristic splitting and they are codimension-one atoroidal. Any map of $T^3$ into $N$ will be parallel to $\partial N = T^3$. The action of $S^{1}$ on itself in $N=M\times S^{1}$ on each piece is fixed point free. Further, the actions on each piece commute along the boundary $\partial N = T^3$. Therefore $DN$ admits a polarised $\Fs$-structure (actually a polarised $\Ts$-structure) and therefore ${\rm MinVol}(M)=0$ and all the asymptotic invariants of $M$ vanish.

This example is codimension-1 atoroidal, but not codimension-2 atoroidal. Take a geodesic loop $\gamma$ in $M$ whose class in $\pi_1(M)$ is non-trivial. Then $\gamma \times S^1$ is an embedding of $T^2$ into $M\times S^1$ which is not boundary parallel. In fact, it is clear that the above example can be extended to arbitrary dimensions. Simply by taking $M$ a hyperbolic $(n-1)$-manifold of finite volume with only one cusp and repeating the construction. Even so, Theorem 1 does apply since the centre of $\pi_1(M)$ is non-trivial.

We conclude that the codimension-one atoroidal pieces are not quite the precise $n$-dimensional analogy of the hyperbolic pieces of the JSJ decomposition in dimension 3.

\begin{Definition}{ An $n$-dimensional smooth manifold $M$ is said to be {\bf codimension-$k$ atoroidal} if any $\pi_1$-injective map of a flat $(n-k)$-torus into $M$ is homotopic into the boundary of $M$.}
\end{Definition}

In fact, the examples above suggest that maybe the pieces which would have positive simplicial volume, or at least some positive asymptotic invariants, should be {\em codimension-k atoroidal} for all $k$ between $1$ and $n-2$. This discussion leads naturally to the following

\begin{Definition}{ An $n$-dimensional smooth manifold is said to be {\bf completely atoroidal} if it is codimension-$k$ atoroidal for any $k$ with $1\leq k \leq n-2$. }
\end{Definition}

The relationship between complete atoroidality and the fundamental group can be seen in the following lemma, which follows from the well known results of D. Gromoll and J. Wolf \cite{GW} and of Lawson and S.T. Yau \cite{LY}.

\begin{Lemma}
If the compact smooth manifold $N$ is completely atoroidal and carries a metric of non-positive curvature, then the centre of $\pi_1(N)$ is trivial.
\end{Lemma}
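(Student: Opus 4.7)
The plan is to establish the contrapositive: assuming the centre $Z := Z(\pi_1(N))$ is non-trivial, I shall exhibit a $\pi_1$-injective, totally-geodesic, flat immersion $T^{j} \to N$ with $2 \leq j \leq n-1$ that is not homotopic into $\partial N$, thereby violating codimension-$(n-j)$ atoroidality.

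First I would note that the Cartan--Hadamard theorem forces $\pi_1(N)$ to be torsion-free, so $Z$ contains a free abelian subgroup $\Z^{k}$ with $k \geq 1$. Invoking the Gromoll--Wolf splitting theorem, in the precise form used by Eberlein in the proof of Theorem 9 above, a finite cover $\tilde N$ of $N$ decomposes as an isometric Riemannian product $N^{\ast} \times T^{k}$, with $N^{\ast}$ compact non-positively curved and $T^{k}$ a flat torus corresponding to $\Z^{k} \subseteq Z$. Since $\pi_1(\tilde N)$ embeds as a finite-index subgroup of $\pi_1(N)$, any abelian subgroup of $\pi_1(\tilde N)$ is also an abelian subgroup of $\pi_1(N)$ of the same rank.

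Next I would enlarge the abelian rank to at least two. If $\dim N^{\ast} \geq 2$, then $\pi_1(N^{\ast})$ is infinite and torsion-free, so any non-trivial $\alpha \in \pi_1(N^{\ast})$ produces a subgroup $\langle \alpha \rangle \times \Z^{k} \cong \Z^{k+1}$ inside $\pi_1(\tilde N) \leq \pi_1(N)$. In the residual low-dimensional cases $\tilde N$ is itself diffeomorphic to a flat $n$-torus, and one picks any rank-$(n-1)$ sublattice of $\Z^{n}$. Either way, applying the Flat Torus Theorem of Lawson and Yau to this rank-$j$ abelian subgroup, with $j = k+1$ or $j = n-1$ and always $2 \leq j \leq n-1$, yields an isometric totally-geodesic $\pi_1$-injective immersion $f \colon T^{j} \to N$ of a flat $j$-torus.

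The step I expect to be the main obstacle is ruling out that $f$ is homotopic into $\partial N$. When $N$ is closed this is immediate, since a $\pi_1$-injective map cannot be homotoped into the empty boundary. If $\partial N \neq \emptyset$, I would appeal to convexity of $\partial N$ together with the totally-geodesic character of $f$: any such homotopy forces the limiting flat totally-geodesic image to lie on a flat boundary component, so that $f_{\ast}\pi_1(T^{j})$ would be conjugate into the fundamental group of a proper flat submanifold of $\partial N$, contradicting the way the rank-$(k+1)$ subgroup was assembled from the centre of $\pi_1(N)$ together with a loop transverse to the $T^{k}$ factor. This contradiction with complete atoroidality completes the argument.
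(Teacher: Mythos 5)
Your proposal and the paper's proof share the same underlying argument: a non-trivial centre produces, via the Gromoll--Wolf and Lawson--Yau flat torus theorem, a flat totally geodesic torus in $N$, and the goal is to see that this violates complete atoroidality. The paper's written proof is a one-liner and silently passes over two points which you have correctly identified and tried to supply. The first is the rank issue: if $Z(\pi_1(N))$ has rank $1$, the flat torus theorem yields only a closed geodesic, and a circle does not contradict codimension-$k$ atoroidality for any $k$ in the allowed range $1 \leq k \leq n-2$ (which concerns tori of dimension at least $2$). Your remedy of adjoining a loop $\alpha \in \pi_1(N^*)$ to form a rank-$(k+1)$ abelian subgroup is exactly what the paper's own example $\gamma \times S^1 \subset M \times S^1$ illustrates, and I regard that enlargement step as a genuine and worthwhile addition to the written proof.

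That said, neither of your two extra steps is complete. In the enlargement step you assume $\pi_1(N^*)$ is non-trivial; if a finite cover of $N$ is, say, $D^m \times T^k$ (a legitimate compact, flat, convex-boundary manifold) there is no $\alpha$ to take, and in fact every torus in such an $N$ is boundary-parallel, so the contrapositive fails --- the lemma as literally stated needs $N$ to be closed, or $N^*$ to be non-contractible, for the argument to get off the ground. In the boundary step, the claim that a homotopy into $\partial N$ forces $f_*\pi_1(T^j)$ to be conjugate into $\pi_1(\partial N)$ is fine, but you give no reason why $\langle\alpha\rangle \times \Z^k$ cannot already lie in $\pi_1(\partial N)$; this requires $\alpha$ to be chosen non-peripheral in $N^*$, which you neither arrange nor justify. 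The paper's proof does not address either of these points, so you are not behind it, but to actually close the argument you would need to justify that a non-collar piece $N^*$ admits a non-peripheral loop, and to state explicitly the hypothesis excluding the degenerate product cases.
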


\begin{proof}
 We observe the contrary is known. Denote by $Z(\pi_1(N))$ the centre of $\pi_1(N)$. The group $Z(\pi_1(N))$ is Abelian, so $Z(\pi_1(N))\neq \{ e \}$ implies by \cite{GW} \& \cite{LY} that there exists a torus in $N$ carrying it. Therefore $N$ is not completely atoroidal.
\end{proof}

This last lemma explains the range of Theorem 1 and leads us to wonder about the nature of completely atoroidal pieces. Assume that the closed smooth and non-positively curved manifold $M$ has a completely atoroidal piece in its geometric characteristic splitting, is ${\rm MinVol}(M)$ positive ?




\end{document}